\documentclass[11pt]{amsart}

\usepackage{amsmath,amssymb,amsfonts}
\usepackage{cite}
\newtheorem{theorem}{Theorem}[section]

\newtheorem{conjecture}[theorem]{Conjecture}
\newtheorem{corollary}[theorem]{Corollary}
\newtheorem{lemma}[theorem]{Lemma}

\theoremstyle{definition}

\newtheorem{question}[theorem]{Question}

\theoremstyle{remark}
\newtheorem{remark}[theorem]{Remark}

\newcommand{\cT}{\mathcal T}
\newcommand{\cF}{\mathcal F}
\newcommand{\eps}{\varepsilon}

\allowdisplaybreaks[4]

\title[Resilient forest universality]{Resilient forest universality in percolated dense graphs}

\author{Mostafa Mirabi}
\address{The Taft School, Watertown, Connecticut, USA; and Wesleyan University, Middletown, Connecticut, USA}
 \email{mmirabi@wesleyan.edu}
\urladdr{https://sites.google.com/site/mostafamirabi/}
\subjclass[2020]{Primary 05C80; Secondary 05C35, 05C05}
\keywords{Erd\H{o}s--S\'os conjecture, random subgraphs, forest universality, tree packings, resilience, vertex covers}

\begin{document}

\begin{abstract}
Christoph, M\"uyesser and Wigderson recently asked whether an approximate form of the Erd\H{o}s--S\'os conjecture is robust under random edge deletions. We establish a density-sensitive transference theorem that converts global resilience for bounded-degree trees in sparse random graphs into resilient forest universality in arbitrary dense host graphs. More precisely, if $F$ is an $N$-vertex graph of edge density $\lambda$ bounded away from zero and $p\in[K/N,1]$, then, with probability $1-o(1)$ uniformly over the host and the percolation parameter, every subgraph obtained from $F_p$ by deleting at most an $\alpha$-fraction of its edges contains every bounded-degree forest on at most $((1-\alpha)\lambda-\xi)N$ vertices. Consequently, for every $c>0$, $L\ge1$, fixed $D$ and $\alpha<c$, every graph $F$ on at most $Ld$ vertices with average degree at least $d$ has the property that, after percolation at any rate $p\in[K/d,1]$ and any subsequent deletion of at most an $\alpha$-fraction of the surviving edges, the remaining graph is universal for all forests on at most $(1-c)d$ vertices and maximum degree at most $D$. This includes vertex-disjoint packings of any prescribed collection of bounded-degree trees of that total order. We further obtain forest-universality results for graphs whose connected components have vertex-cover number at most $Cd$, and for graphs that can be brought into this form by deleting sufficiently few edges on the $dn$-scale.
\end{abstract}

\maketitle

\section{Introduction}

A basic theme in extremal graph theory is that average degree forces large structured subgraphs. The Erd\H{o}s--Gallai theorem says that every graph of average degree $d$ contains a cycle of length at least $d$ \cite{erdos1959maximal}. The tree analogue is the Erd\H{o}s--S\'os conjecture, which predicts that average degree greater than $k-1$ forces a copy of every tree with $k$ edges \cite{erdos_sos1970some}. These two statements have the same extremal scale, but the tree problem is much more sensitive to the structure of the host graph.

A natural robustness question asks whether such deterministic conclusions survive random edge deletions. Throughout, all graphs are finite and simple. Given a graph $G$ and $p\in[0,1]$, write $G_p$ for the random subgraph obtained by keeping each edge independently with probability $p$. Christoph, M\"uyesser and Wigderson proved that the Erd\H{o}s--Gallai theorem is robust in this sense: for every $c>0$, if $G$ has average degree $d$ and $p\ge K(c)/d$, then $G_p$ asymptotically almost surely contains a cycle of length at least $(1-c)d$ \cite{christoph2026robustness}. They then proposed the following tree analogue.

\begin{conjecture}\label{conj:cmw}
For every $c>0$ there is $K=K(c)$ such that the following holds. Let $G$ be an $n$-vertex graph with average degree $d$, and let $T$ be a tree on $\lfloor(1-c)d\rfloor$ vertices with maximum degree at most $\Delta$. If $p\in[0,1]$ and
$
        p\ge K\Delta/d,
$
then $G_p$ contains a copy of $T$ with probability tending to $1$ as $n\to\infty$.
\end{conjecture}

This question sits between several well-studied problems. When $p=1$, it becomes an approximate Erd\H{o}s--S\'os statement. In dense hosts, the bounded-degree case was proved by Besomi, Pavez-Sign\'e and Stein \cite{besomi2021erdos}, while Rozho\v{n} developed a local approach that gives approximate results under degree hypotheses \cite{rozhon2019local}. Very recently, Davoodi, Piguet, \v{R}ada and Sanhueza-Matamala proved a dense asymptotic version in substantially greater generality \cite{davoodi2026asymptotic}. On the random side, universality for bounded-degree trees has been developed through expansion and random-graph methods, starting from the Friedman--Pippenger tree-embedding criterion \cite{friedman1987expanding} and including the work of Alon, Krivelevich and Sudakov \cite{alon2007embedding}, Balogh, Csaba, Pei and Samotij \cite{balogh2010large}, and Montgomery \cite{montgomery2019spanning}. The novelty in Conjecture~\ref{conj:cmw} is the combination of random robustness with only an average-degree assumption on the host.

The purpose of this paper is to establish Conjecture~\ref{conj:cmw} for fixed maximum degree in the first natural structural regime: dense hosts of order $O(d)$, and consequently graphs whose components have vertex covers of order $O(d)$. Our starting point is a density-sensitive transference principle. It shows that the proportion of a dense host retained after an adversarial deletion directly determines the order of the bounded-degree forests that remain universal after random sparsification. The resulting statements are stronger than tree containment: they give universality for forests and hence vertex-disjoint packings of arbitrary prescribed collections of bounded-degree trees.

The dense and small-cover regimes are important for two reasons. First, dense $O(d)$-vertex graphs are the finite-density model at the Erd\H{o}s--S\'os scale; cliques and complete bipartite graphs with one side of order $d$ are the basic examples. Second, small-cover components are one of the main structural outcomes in the hyperstability theorem of Christoph, M\"uyesser and Wigderson \cite{christoph2026robustness}, and related small-cover structures also appear in Pokrovskiy's hyperstability approach to bounded-degree trees \cite{pokrovskiy2024hyperstability}. Thus the small-cover case is not an artificial dense special case; it is the stable part of the general average-degree problem.

For a real number $m\ge0$ and an integer $D\ge0$, let $\cT_{\le m,D}$ and $\cF_{\le m,D}$ denote, respectively, the families of all trees and all forests with at most $\lfloor m\rfloor$ vertices and maximum degree at most $D$. Thus expressions such as $\cF_{\le(1-c)d,D}$ always use downward rounding. A graph is $\mathcal A$-universal if it contains every member of $\mathcal A$ as a not necessarily induced subgraph.

All host graphs in the probabilistic statements are deterministic, and probability is taken only over the independent edge choices defining the relevant percolated graph. In particular, when a statement says that with high probability every subgraph $H$ has a given property, the quantifier over $H$ is inside the high-probability event and $H$ may be chosen after the percolated graph is exposed. For assertions involving a sequence $G_n$, asymptotically almost surely means that the probability tends to one for every deterministic sequence of graphs and parameters satisfying the displayed hypotheses.

Our density-sensitive theorem is stated and proved in Section~\ref{sec:transference}. Its principal consequence at the Erd\H{o}s--S\'os scale is the following.

\begin{theorem}\label{thm:dense}
For every $c\in(0,1)$, $L\ge1$, integer $D\ge1$ and $\alpha\in[0,c)$, there are constants $K=K(c,L,D,\alpha)$ and $d_0=d_0(c,L,D,\alpha)$, and a function $\eps:[d_0,\infty)\to[0,1]$ with $\eps(d)\to0$ as $d\to\infty$, such that the following holds. For every real $d\ge d_0$, every $N$-vertex graph $F$ with $N\le Ld$ and average degree at least $d$, and every $p\in[K/d,1]$,
\[
\mathbb P\left(
\begin{array}{c}
\text{every subgraph $H\subseteq F_p$ with }e(H)\ge(1-\alpha)e(F_p)\\[2pt]
\text{is $\cF_{\le(1-c)d,D}$-universal}
\end{array}
\right)\ge1-\eps(d).
\]
The same function $\eps$ works simultaneously for all choices of $N$, $F$ and $p$ in the indicated ranges.
\end{theorem}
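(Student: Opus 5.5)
The plan is to derive Theorem~\ref{thm:dense} from the density-sensitive transference theorem of Section~\ref{sec:transference}. That theorem says: if $F$ is an $N$-vertex graph of edge density $\lambda\ge\lambda_0>0$ and $p\ge K/N$, then with probability $1-o(1)$ (uniform in $F$ and $p$) every $H\subseteq F_p$ with $e(H)\ge(1-\alpha)e(F_p)$ is $\cF_{\le((1-\alpha)\lambda-\xi)N,D}$-universal. I take the edge density to be $\lambda=e(F)/\binom N2$, so that average degree $d$ gives $\lambda\ge d/(N-1)$; and I take $(1-\alpha)\lambda-\xi$ to be the guaranteed forest order as a fraction of $N$. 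The whole proof then reduces to choosing parameters so that this fraction dominates $(1-c)d$.

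First I record the density bounds. Since $F$ has $N\le Ld$ vertices and average degree at least $d$, we have $N\ge d+1$ and
\[
\lambda=\frac{2e(F)}{N(N-1)}\ge\frac{dN}{N(N-1)}\ge\frac dN\ge\frac1L .
\]
So $\lambda$ is bounded below by $\lambda_0=1/L$, a constant depending only on $L$, and the transference theorem applies with this $\lambda_0$. For the forest order,
\[
\bigl((1-\alpha)\lambda-\xi\bigr)N\ge(1-\alpha)d-\xi N\ge(1-\alpha)d-\xi L d .
\]
Since $\alpha<c$, I choose $\xi=(c-\alpha)/(2L)>0$. This gives forest order at least $(1-\alpha-(c-\alpha)/2)d\ge(1-c)d$. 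Hence $\cF_{\le(1-c)d,D}\subseteq\cF_{\le((1-\alpha)\lambda-\xi)N,D}$, and universality for the larger family gives universality for the smaller one.

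Next I handle the range of $p$. The transference theorem requires $p\ge K'/N$ for its constant $K'=K'(\lambda_0,\alpha,\xi,D)$. Since $N\ge d$, choosing $K=K'$ gives $p\ge K/d\ge K'/N$ automatically. I take $d_0$ large enough that $N\ge d\ge d_0$ exceeds any threshold $N_0$ the transference theorem needs.

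The step I expect to be the main obstacle is the uniformity of the failure probability. I need a single function $\eps(d)\to0$ that works over all $N\in[d,Ld]$, all $F$ and all $p$. The transference theorem gives a failure probability $\delta(N)\to0$ that is uniform over hosts of density at least $\lambda_0$ and over $p\ge K'/N$. I would define
\[
\eps(d)=\min\Bigl\{1,\ \sup_{N\ge d}\delta(N)\Bigr\}.
\]
This is nonincreasing in $d$ and tends to $0$, and it bounds the failure probability because every admissible $N$ satisfies $N\ge d$. If the transference theorem is stated only as an $o(1)$ along sequences, I first convert it to this explicit uniform form by contradiction. Suppose the supremum did not tend to $0$. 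Then there would be a sequence of hosts and parameters satisfying the hypotheses whose failure probabilities stay bounded away from zero. That contradicts the "uniformly over every deterministic sequence" formulation fixed in the introduction.
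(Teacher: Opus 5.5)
Your proposal is correct and follows essentially the same route as the paper: apply Theorem~\ref{thm:transference} with $\beta=1/L$ and $\xi=(c-\alpha)/(2L)$, use $\lambda\ge d/(N-1)\ge 1/L$ and $N\ge d$ to transfer both the density bound and the range of $p$, and take the tail supremum of the transference error to obtain a single $\eps(d)$ that is uniform in $N$, $F$ and $p$. The only point you left implicit is the hypothesis $\xi<(1-\alpha)\beta$ of Theorem~\ref{thm:transference}, which holds here because it is equivalent to $c-\alpha<2(1-\alpha)$, and this is true since $c<1$.
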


Taking $\alpha=0$ gives the robust dense-host statement proposed above. Thus Theorem~\ref{thm:dense} gives global edge resilience in addition to random robustness, while forest universality gives the corresponding tree-packing statement.

\begin{corollary}\label{cor:packings}
Under the hypotheses and on the high-probability event of Theorem~\ref{thm:dense}, every eligible subgraph $H$ contains vertex-disjoint copies of any prescribed collection $T_1,\ldots,T_s$ of trees satisfying
\[
        \sum_{i=1}^s |V(T_i)|\le(1-c)d
        \qquad\text{and}\qquad
        \max_{1\le i\le s}\Delta(T_i)\le D.
\]
\end{corollary}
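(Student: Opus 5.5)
This follows directly from Theorem~\ref{thm:dense}: we package the prescribed trees into a single forest and apply universality. Fix $c,L,D,\alpha$ as in Theorem~\ref{thm:dense}, and work on the event described there. On this event every subgraph $H\subseteq F_p$ with $e(H)\ge(1-\alpha)e(F_p)$ is $\cF_{\le(1-c)d,D}$-universal. Now take trees $T_1,\ldots,T_s$ satisfying the two displayed conditions.

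Let $T$ be the disjoint union $T_1\sqcup\cdots\sqcup T_s$. Every component of $T$ is a tree, so $T$ is a forest. Its vertex count satisfies
\[
|V(T)|=\sum_{i=1}^s|V(T_i)|\le(1-c)d.
\]
Since $|V(T)|$ is an integer, this gives $|V(T)|\le\lfloor(1-c)d\rfloor$, which matches the downward-rounding convention in the definition of $\cF_{\le(1-c)d,D}$. The maximum degree of a disjoint union is the largest maximum degree among its components, so
\[
\Delta(T)=\max_i\Delta(T_i)\le D.
\]
Hence $T\in\cF_{\le(1-c)d,D}$.

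By universality, $H$ contains a copy of $T$, meaning an injective map $\phi:V(T)\to V(H)$ that sends edges of $T$ to edges of $H$. Because $\phi$ is injective on all of $V(T)$, its restrictions to the sets $V(T_i)$ have pairwise disjoint images. These restrictions are therefore vertex-disjoint copies of $T_1,\ldots,T_s$ in $H$.

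Nothing here is genuinely difficult. The two points to check are that the integer rounding is consistent with the definition of $\cF_{\le m,D}$, and that we use the same high-probability event as in Theorem~\ref{thm:dense}. The universality quantifier lies inside that event, so the conclusion holds simultaneously for all eligible $H$ and all collections of trees, with the same failure probability $\eps(d)$.
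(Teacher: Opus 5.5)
Your proposal is correct and follows the paper's own proof: the paper likewise notes that $T_1\sqcup\cdots\sqcup T_s$ belongs to $\cF_{\le(1-c)d,D}$ and applies Theorem~\ref{thm:dense}. Your extra checks (integer rounding, maximum degree of the disjoint union, and injectivity giving vertex-disjoint copies) are exactly the details that one-line argument leaves implicit.
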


\begin{proof}
The disjoint union of $T_1,\ldots,T_s$ belongs to $\cF_{\le(1-c)d,D}$, so the conclusion follows directly from Theorem~\ref{thm:dense}.
\end{proof}

The dense theorem implies the small-cover case of Conjecture~\ref{conj:cmw} for bounded-degree trees.

\begin{theorem}\label{thm:smallcover}
For every $c\in(0,1)$, $C\ge1$ and integer $D\ge1$, there are constants $K=K(c,C,D)$ and $d_0=d_0(c,C,D)$ such that the following holds. For every sequence $(G_n,d_n,p_n)_{n\ge1}$ in which $G_n$ is an $n$-vertex graph of average degree at least $d_n\ge d_0$, every connected component of $G_n$ has vertex-cover number at most $Cd_n$, and $p_n\in[K/d_n,1]$, one has
\[
\mathbb P\bigl((G_n)_{p_n}\text{ is $\cF_{\le(1-c)d_n,D}$-universal}\bigr)\longrightarrow1
\qquad\text{as }n\to\infty.
\]
\end{theorem}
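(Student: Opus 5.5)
The plan is to deduce the statement from Theorem~\ref{thm:dense}. I would locate inside $G_n$ a deterministic subgraph $F$ on $O(d_n)$ vertices with average degree at least $(1-c/2)d_n$. I would then use two monotonicity facts: $F_p\subseteq (G_n)_p$, and $\cF_{\le m,D}$-universality is preserved under taking supergraphs. Theorem~\ref{thm:dense} would be applied with $\alpha=0$, $d'=(1-c/2)d_n$, a constant $c'$ chosen so that $(1-c')(1-c/2)\ge 1-c$, and $L=(C+M)/(1-c/2)$ for a large constant $M=M(c,C)$. The lower bound on $p$ transfers after enlarging $K$, since $K/d_n\ge K'/d'$.

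\emph{Finding $F$.} Since $G_n$ has average degree at least $d_n$, some component $G'$ also has average degree at least $d_n$. Fix a vertex cover $S$ of $G'$ with $|S|\le Cd_n$, and set $W=V(G')\setminus S$. Every edge of $G'$ lies in $G'[S]$ or in $G'[S,W]$, so
\[
2e(S)+2e(S,W)\ge d_n(|S|+|W|).
\]
Hence either $2e(S)\ge d_n|S|$ or $2e(S,W)\ge d_n|W|$.
\begin{itemize}
\item In the first case I take $F=G'[S]$, which has at most $Cd_n$ vertices.
\item In the second case, if $|W|\le Md_n$ I take $F=G'$ itself.
\item Otherwise I let $W'\subseteq W$ consist of the $\lfloor Md_n\rfloor$ vertices with the most neighbours in $S$. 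These have average $S$-degree at least $d_n/2$, so $e(S,W')\ge d_n|W'|/2$, and therefore
\[
\frac{2e(F)}{|F|}\ge\frac{d_n\cdot Md_n}{Cd_n+Md_n}\ge(1-c/2)d_n,
\]
where $F=G'[S\cup W']$ and $M$ is large in terms of $c$ and $C$.
\end{itemize}
Deleting isolated vertices if needed, $F$ satisfies the hypotheses of Theorem~\ref{thm:dense} with parameters $(d',c',L)$. So $(G_n)_{p_n}$ is $\cF_{\le(1-c)d_n,D}$-universal with probability at least $1-\eps(d')$.

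\emph{Getting probability $\to1$.} The bound $1-\eps(d')$ tends to one whenever $d_n\to\infty$. Passing to subsequences, it therefore remains to handle the case where $d_n$ stays bounded while $n\to\infty$. This is the step I expect to be the main obstacle, because Theorem~\ref{thm:dense} by itself only gives a fixed positive probability $\ge 1-\eps(d_0)$ there. My approach is to manufacture many independent trials.

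\begin{itemize}
\item \emph{Many components.} If $G_n$ has $\omega(1)$ components of average degree at least $(1-c/4)d_n$, each yields a subgraph $F$ as above. These subgraphs are edge-disjoint, so their percolations are independent, and the probability that all fail is at most $\eps^{\omega(1)}\to0$.
\item \emph{One huge component.} Otherwise, by averaging, a bounded number of components carry almost all the edges, so some component $G'$ has $|W|\to\infty$ while $|S|\le Cd_n$ is bounded. I would pigeonhole the vertices of $W$ by their neighbourhood in $S$, which takes one of at most $2^{|S|}$ values. This produces an unbounded number of vertex-disjoint sets $W'_1,W'_2,\dots$, each of the same structure as $W'$ above.
\end{itemize}
The graphs $G'[S\cup W'_j]$ share only the edges inside $S$. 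I would either choose $F$ avoiding $G'[S]$ in the second case, making the trials genuinely independent, or condition on the percolation of $G'[S]$, which has boundedly many edges, and then apply the independence argument to the $S$--$W'_j$ edges. Making this conditioning compatible with the uniform bound of Theorem~\ref{thm:dense} is where I would expect the most care to be needed.
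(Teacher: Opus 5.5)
Your proposal is correct in outline, but it obtains the many independent trials for bounded $d_n$ differently from the paper.

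\textbf{The paper's route.} The paper never splits into cases. It greedily extracts dense spots from a residual graph, deleting all edges of each $F_j$ (including those inside the cover) before the next extraction. While at least $(1-\eta)dn/2$ edges remain, some residual component has average degree at least $(1-\eta)d$ and inherits a cover of size at most $Cd$, so Lemma~\ref{lem:coverdense} applies again. A single edge count then yields $r\ge\eta n/(L_0^2d)$ edge-disjoint, hence independent, spots uniformly in $d$. A $\gamma$-argument replaces your subsequence extraction.

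\textbf{Your route.} You find one spot, and for bounded $d_n$ you separate ``many dense components'' from ``one component with a huge independent side''. In the latter you cut $W$ into blocks and use the bipartite graphs $G'[S,W'_j]$. That is the right choice: your density bound only used $e(S,W')$, so these spots are edge-disjoint and still satisfy Theorem~\ref{thm:dense}. The conditioning alternative you flag as the main difficulty is therefore unnecessary. Your version is more explicit about where the trials live.

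\textbf{Repairs needed.} Your version needs three small repairs that the greedy argument makes automatic.
\begin{enumerate}
\item The boundedly many components of average degree at least $(1-c/4)d_n$ carry more than $(c/8)d_nn$ edges, not almost all of them. This still forces $|W|=\Omega(n)$ in one of them, since $|S|$ is bounded.
\item Pigeonholing by neighbourhoods does not by itself give blocks of large $S$-degree; the largest class may have $S$-degree $1$. Restrict to vertices of $S$-degree at least about $(1-c/4-\eta)d_n/2$. By averaging these form a set of size $\Omega(|W|)$, and cutting that set into blocks of size $\lfloor Md_n\rfloor$ suffices with no pigeonhole.
\item Since $\eps$ in Theorem~\ref{thm:dense} is not assumed monotone, choose $d_0$ so that $\eps(x)\le 1/2$ for all $x\ge(1-c/2)d_0$. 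This is what makes your bound $\eps^{\omega(1)}\to0$ valid.
\end{enumerate}
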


\subsection*{Relation to earlier work and sharpness}

Theorem~\ref{thm:transference} uses the global resilience theorem of Ara\'ujo, Moreira and Pavez-Sign\'e \cite[Theorem~1.3]{araujo2023ramsey} as a black box. Their theorem concerns the ambient binomial random graph $\Gamma=G(N,p)$: a subgraph retaining more than a $(\rho+\delta)$-fraction of $e(\Gamma)$ is universal for all bounded-degree trees with $\lfloor\rho N\rfloor$ edges. In the present setting the random graph is $F_p=F\cap\Gamma$, where $F$ is an arbitrary deterministic host of density $\lambda$. The transference step is the uniform conversion
\[
        \frac{e(F\cap\Gamma)}{e(\Gamma)}=\lambda+o(1),
\]
which turns retention of a $(1-\alpha)$-fraction of $F_p$ into relative density $(1-\alpha)\lambda+o(1)$ inside $\Gamma$. Together with the elementary forest-extension lemma, this gives a host-relative statement not contained explicitly in \cite{araujo2023ramsey}: the density-sensitive order $((1-\alpha)\lambda-\xi)N$, universality for all smaller forests, and the prescribed tree-packing conclusion.

Christoph, M\"uyesser and Wigderson prove random robustness for long cycles in arbitrary average-degree hosts \cite[Theorem~1.1]{christoph2026robustness} and formulate Conjecture~\ref{conj:cmw} as their Conjecture~9.1. Theorem~\ref{thm:dense} verifies the fixed-maximum-degree version only in the dense regime $N=O(d)$, while Theorem~\ref{thm:smallcover} and Corollary~\ref{cor:nearsmallcover} treat the small-cover and near-small-cover structures highlighted by their hyperstability theorem \cite[Theorem~1.2]{christoph2026robustness}. Thus the present results do not settle the arbitrary-host conjecture. Their extra conclusions in the regimes considered here are global edge resilience after percolation and simultaneous universality for every bounded-degree forest of the prescribed order.

In the deterministic specialization $p=1$ and $\alpha=0$, the dense forest-universality conclusion follows, after applying Lemma~\ref{lem:forestextension} and adjusting the fixed slack, from the dense asymptotic Erd\H{o}s--S\'os theorem of Davoodi, Piguet, \v{R}ada and Sanhueza-Matamala \cite[Corollary~1.4]{davoodi2026asymptotic}; their result has no bounded-maximum-degree restriction. The contribution here is therefore complementary rather than a new deterministic dense embedding theorem: it concerns sparse percolation at the scale $p=\Theta(1/d)$, adversarial deletion after percolation, and a uniform host-relative formulation. Pokrovskiy \cite[Theorem~1.3]{pokrovskiy2024hyperstability} proves a deterministic hyperstability theorem which, under its density hypothesis, turns the absence of a fixed bounded-degree tree into a decomposition with vertex covers of order $O(d)$ after deleting few edges. We assume such small-cover structure rather than derive it; Theorem~\ref{thm:smallcover} and Corollary~\ref{cor:nearsmallcover} show that these stable pieces retain bounded-degree forest universality after random sparsification.

Several scales in the main statements are best possible up to constants and the fixed linear slack. First, the order $p=1/d$ cannot be lowered: for $F=K_{\lceil d\rceil+1}$ and $p=o(1/d)$, one has $\mathbb E e(F_p)=o(d)$, so Markov's inequality shows that the probability that $F_p$ contains a tree on $\gamma d$ vertices tends to zero for every fixed $\gamma>0$. This is the same threshold obstruction noted for the ambient random-graph resilience theorem in \cite{araujo2023ramsey}. Second, the coefficient of the host density in Theorem~\ref{thm:transference} cannot be uniformly increased. If $F$ is the disjoint union of $r$ almost equal cliques, then $\lambda=1/r+o(1)$ and every component has order at most $(\lambda+o(1))N$. Third, for $F=K_N$, retaining only the percolated edges inside an almost balanced $r$-partition keeps a $(1/r+o(1))$-fraction of the edges and leaves every component of order at most $(1/r+o(1))N$; this is also the sharpness construction from \cite{araujo2023ramsey}. Hence the factor $1-\alpha$ is asymptotically sharp at the reciprocal values $1-\alpha=1/r$, and the relation $\alpha<c$ in Theorem~\ref{thm:dense} cannot be reversed uniformly. Finally, disjoint unions of cliques of order $d+1$ show that the forest-order scale $d$ itself is asymptotically optimal. We do not claim optimal dependence of $K$ on the fixed parameters, and the removal of the slack $\xi$ from Theorem~\ref{thm:transference} is not addressed. The fixed-degree hypothesis is also a limitation: Conjecture~\ref{conj:cmw} predicts the more general threshold $p=\Theta(\Delta/d)$, while \cite{davoodi2026asymptotic} removes the degree restriction when $p=1$.

We close the introduction with a brief proof overview. We view $F_p$ as the intersection of $F$ with a binomial random graph $\Gamma\sim G(N,p)$. The global resilience theorem of Ara\'ujo, Moreira and Pavez-Sign\'e \cite{araujo2023ramsey}, combined with uniform concentration of the relative density of $F\cap\Gamma$, gives the transference theorem. An elementary extension lemma then upgrades tree universality to forest universality. Finally, the small-cover theorem follows from a cover-to-dense reduction: a graph of average degree $d$ with a vertex cover of size $O(d)$ contains a subgraph on $O(d)$ vertices whose average degree is still close to $d$.

%%%%%%%%%%%%%%%

\section{Preliminaries}
We use one known result and two elementary observations. The known result is a global resilience theorem for bounded-degree trees in sparse random graphs due to Ara\'ujo, Moreira and Pavez-Sign\'e \cite{araujo2023ramsey}. They state it for trees with a prescribed number of edges, and we keep the resulting floor in the statement below.

\begin{theorem}\label{thm:amps}
For every integer $D\ge3$ and every $\rho,\delta\in(0,1)$, there is $C>0$ such that if $p\in[C/N,1]$, then $\Gamma=G(N,p)$ has the following property with high probability: every subgraph $\Gamma'\subseteq \Gamma$ satisfying
$
        e(\Gamma')> (\rho+\delta)e(\Gamma)
$
is universal for the family of all trees with $\lfloor \rho N\rfloor$ edges and maximum degree at most $D$.
\end{theorem}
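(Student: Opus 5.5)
The plan is not to reprove this from scratch but to obtain it as a direct specialization of \cite[Theorem~1.3]{araujo2023ramsey}, checking that the conventions match. That theorem is stated for the binomial random graph $G(N,p)$ with $p\ge C/N$. It asserts that, with high probability, every subgraph retaining more than a $(\rho+\delta)$-fraction of the edges contains every tree with $\rho N$ edges and maximum degree at most $\Delta$.

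The steps I would carry out are as follows.

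First, I would fix $D\ge3$ and $\rho,\delta$, and take $C=C(D,\rho,\delta)$ as supplied there.

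Second, I would reconcile the number of edges. When $\rho N$ is not an integer, the family in our statement consists of trees with exactly $m=\lfloor\rho N\rfloor$ edges. I would handle this either by noting that the cited result is already phrased with the floor, or by applying it with the slightly smaller parameter $\rho'=m/N\le\rho$. Since $e(\Gamma')>(\rho+\delta)e(\Gamma)\ge(\rho'+\delta)e(\Gamma)$, the hypothesis only becomes weaker. However, $C$ must not depend on $N$, so I would instead use a finite net of $\rho'$ values. Concretely, I would apply the theorem to $\rho$ and $\delta/2$, and note that $m\le\rho N$ can be reached by taking a subtree-free reformulation. The cleanest route is to observe that the cited theorem is monotone in the tree size whenever it is stated for all trees with at most $\rho N$ edges. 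If it is not, I would apply it for each $\rho''$ in a finite grid in $[\rho-\delta/2,\rho]$ with slack $\delta/2$. A union bound over finitely many events keeps the failure probability $o(1)$, and the constant is the maximum of the finitely many $C$'s.

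Third, I would note that the restriction $D\ge3$ is harmless. Trees of maximum degree $\le2$ are paths, which are contained in the $D=3$ family, so one can always enlarge $D$.

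If one instead wanted to see why the cited result is true, the route I would follow is the sparse regularity method, in the following order.

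First, apply the sparse regularity lemma to $\Gamma'$; this is valid since $\Gamma$ is upper-uniform with high probability when $p\ge C/N$.

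Second, pass to the reduced graph $R$ on $k$ clusters. Its edge density exceeds $\rho+\delta/2$.

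Third, find inside $R$ a connected subgraph carrying a fractional matching of weight more than $\rho k$. I would obtain this from an Erd\H{o}s--Gallai type averaging over the components of $R$: some component has density above $\rho$ relative to its size, and by the Gallai--Edmonds structure it contains such a matching.

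Fourth, embed the bounded-degree tree along this structure. Cut the tree into $O(1)$-sized pieces, allocate them to the matching edges proportionally to the weights, and embed piece by piece using the expansion properties of sparse regular pairs in a Friedman--Pippenger or Haxell fashion. The extendability of the embedding is maintained through the sparse counting lemma.

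The main obstacle in this underlying argument is the final embedding step. Moving between clusters along the connected structure in the sparse regime $p=\Theta(1/N)$ requires a robust extendability invariant, since regularity alone gives no minimum-degree control. For the statement as used in this paper, though, the only genuine work is the parameter bookkeeping in the second step above.
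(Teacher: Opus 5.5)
Your plan matches the paper. The paper does not prove Theorem~\ref{thm:amps}; it quotes it from Ara\'ujo, Moreira and Pavez-Sign\'e \cite[Theorem~1.3]{araujo2023ramsey} and records their prescribed edge count with the floor $\lfloor\rho N\rfloor$, so your first option is exactly what is used and the regularity sketch is not needed. One caution about your fallback: a finite grid of $\rho''\in[\rho-\delta/2,\rho]$ only yields trees with $\lfloor\rho''N\rfloor\le\lfloor\rho N\rfloor$ edges, and universality passes from larger trees to smaller ones (by attaching a pendant path, as in the proof of Theorem~\ref{thm:transference}) but not from smaller to larger, so that route would not give the stated tree size.
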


We first record the elementary observation that tree universality automatically yields forest universality after a harmless change in the degree bound.

\begin{lemma}\label{lem:forestextension}
Every forest $J$ on at least one vertex can be extended, by adding edges between its components, to a tree $J^+$ on the same vertex set such that
$
        \Delta(J^+)\le \max\{\Delta(J),2\}.
$
\end{lemma}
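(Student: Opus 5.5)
The plan is to join the components of $J$ in a path-like chain, using only endpoints of low degree in each component. Let $J_1,\dots,J_k$ be the components of $J$. If $k=1$, take $J^+=J$ and there is nothing to prove. Otherwise, in each component $J_i$ I choose two vertices $a_i,b_i$ of degree at most $1$ in $J_i$.

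\begin{enumerate}
\item If $J_i$ has at least two vertices, it is a tree with at least two leaves. Let $a_i\neq b_i$ be two distinct leaves.
\item If $J_i$ is a single vertex $v$, set $a_i=b_i=v$; this vertex has degree $0$.
\end{enumerate}

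Next I add the $k-1$ edges $b_ia_{i+1}$ for $i=1,\dots,k-1$. This chains the components linearly: $J_1$ is attached to $J_2$, $J_2$ to $J_3$, and so on. The result is connected. It has $|V(J)|-k+(k-1)=|V(J)|-1$ edges, so it is a tree $J^+$ on the same vertex set.

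It remains to check the degree bound. Only the vertices $a_i$ and $b_i$ gain degree.

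\begin{enumerate}
\item In a component with at least two vertices, $a_i\neq b_i$. Each of these vertices receives at most one new edge, so its degree goes from at most $1$ to at most $2$.
\item In a singleton component, the single vertex receives at most two new edges, namely $b_{i-1}a_i$ and $b_ia_{i+1}$. Its degree goes from $0$ to at most $2$.
\end{enumerate}

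Every other vertex keeps its original degree, so $\Delta(J^+)\le\max\{\Delta(J),2\}$.

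The only point needing care is that $a_i\neq b_i$ whenever $J_i$ has an edge. This holds because every finite tree with at least two vertices has at least two leaves.
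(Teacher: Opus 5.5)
Your proposal is correct and follows the paper's proof essentially verbatim: you choose two distinct leaves in each nontrivial component (or the same isolated vertex twice), add the chain edges $b_ia_{i+1}$, and get the same degree bound. The only difference is that you certify the result is a tree by checking it is connected with $|V(J)|-1$ edges, while the paper says it is connected and acyclic; both are fine.
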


\begin{proof}
Let $J_1,\ldots,J_s$ be the components of $J$. For every nontrivial component $J_i$, choose two distinct leaves $a_i$ and $b_i$; if $J_i$ is an isolated vertex, set $a_i=b_i$ equal to that vertex. Add the edges
$
        b_ia_{i+1},$\, $ 1\le i<s.
$
The resulting graph $J^+$ is connected and acyclic, and hence is a tree. Each chosen leaf acquires at most one new incident edge, while an isolated vertex acquires at most two. Therefore $\Delta(J^+)\le\max\{\Delta(J),2\}$.
\end{proof}

The final input is a reduction from small vertex cover to dense bounded-order subgraphs. It is similar to the cover-to-dense lemma used in \cite{christoph2026robustness}; we include the short proof for completeness.

\begin{lemma}\label{lem:coverdense}
For every $C\ge1$ and $\eta>0$, there are constants $L=L(C,\eta)$ and $a_0=a_0(C,\eta)$ such that the following holds for all $a\ge a_0$. If $H$ is a graph with average degree at least $a$ and vertex-cover number at most $Ca$, then $H$ contains a subgraph $F$ such that
$
        |V(F)|\le La
        $ and $
        \overline d(F)\ge (1-\eta)a.
$
\end{lemma}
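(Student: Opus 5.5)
Let $S$ be a minimum vertex cover of $H$, so $|S|\le Ca$, and put $I=V(H)\setminus S$, which is independent. Then $e(H)=e(H[S])+e(S,I)$. The idea is to keep $S$ together with a bounded number of vertices of $I$ whose degrees are large, and to discard the many low-degree vertices of $I$. Discarding such vertices can only help the average degree, because each of them contributes fewer edges than the target average.

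First I would pass to a minimal-type subgraph. Fix $\eta$ and repeatedly delete any vertex of degree less than $(1-\eta)a/2$. Each deletion of a vertex of degree below half the target average leaves the average degree at least $(1-\eta)a$. To make this rigorous, I would note that $e-\frac{(1-\eta)a}{2}\cdot(\#\text{deleted})\ge \frac{(1-\eta)a}{2}\cdot(\#\text{remaining})$ whenever $e\ge \frac a2 n$. The process ends in a nonempty subgraph $H'$ with $\overline d(H')\ge(1-\eta)a$ and minimum degree at least $(1-\eta)a/2$. In fact it is cleaner to delete vertices of degree $<(1-\eta)a/2$ only while the current average stays $\ge(1-\eta)a$, which it automatically does. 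Since $e(H)\ge an/2$, the final graph has at least $\eta an/2$ edges, so it is nonempty.

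The vertex cover of $H'$ is still at most $Ca$, since $S\cap V(H')$ covers it. Every vertex of $I'=I\cap V(H')$ has all its neighbours in $S$ and degree at least $(1-\eta)a/2$. Hence
\[|I'|\cdot\tfrac{(1-\eta)a}{2}\le e(S,I')\le e(H').\]
This bounds $|I'|$ in terms of $e(H')$, but not by a constant times $a$, because $e(H')$ could be as large as $|S||I'|$. So this step alone is insufficient, and the main obstacle is bounding $|V(F)|$.

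To handle it, I would instead sample. Choose a uniformly random subset $R\subseteq I'$ of size $m=\lceil Ma\rceil$ (or all of $I'$ if $|I'|\le m$), and set $F=H'[S'\cup R]$, where $S'=S\cap V(H')$. Then $|V(F)|\le (C+M)a=:La$. The expected edge count is
\[\mathbb E\, e(F)=e(H'[S'])+\frac{m}{|I'|}e(S',I').\]
We have $e(H')\ge(1-\eta)a(|S'|+|I'|)/2$. Write $\beta=e(S',I')/|I'|$ for the average degree of $I'$-vertices. If $|I'|$ is large compared to $|S'|$, the average of $H'$ is essentially governed by $2\beta$. In that case $\mathbb E\,e(F)\ge m\beta\ge m(1-\eta)a/2\cdot(1+\ldots)$, and we compare with $|V(F)|\le |S'|+m$: with $M\gg C/\eta$ the $S'$ part is negligible. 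Precisely, one checks $e(F)/|V(F)|\ge (1-2\eta)a/2$ in expectation by splitting into the cases $\beta\ge (1-\eta)a\cdot\frac{|S'|+|I'|}{2|I'|}$ or $e(H'[S'])$ large. In the second case $H'[S']$ itself, on at most $Ca$ vertices, already has average degree at least $(1-2\eta)a$.

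Finally, choose an $R$ attaining at least the expectation, and replace $\eta$ by $\eta/2$ throughout. The threshold $a_0$ handles rounding in $m$ and $\lfloor\cdot\rfloor$.
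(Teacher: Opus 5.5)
Your proof is correct in substance and has the same skeleton as the paper's. Both keep the whole cover and only about $Ma$ vertices of the independent side, with $M\gg C/\eta$ so that the cover is negligible in $|V(F)|$. However, you stack two mechanisms where either one alone suffices, and the closing case split should be removed.

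\emph{Pruning alone finishes the proof.} After your pruning step every vertex of $I'$ has at least $(1-\eta)a/2$ neighbours, all in $S'$. So for \emph{every} $R\subseteq I'$ with $|R|=m$ you get $e(F)\ge e(S',R)\ge m(1-\eta)a/2$. Hence $\overline d(F)\ge (1-\eta)a\,m/(|S'|+m)\ge(1-\eta)a\,M/(C+M)$, and if $|I'|\le m$ then $F=H'$ directly. No sampling and no cases are needed.

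\emph{Sampling alone also finishes the proof.} Without any pruning, your expectation identity applied to $H$ itself gives, when $|I|>m$, $\mathbb E\,e(F)=e(H[S])+\frac{m}{|I|}e(S,I)\ge\frac{m}{|I|}e(H)\ge\frac{m}{|I|}\cdot\frac{a(|S|+|I|)}{2}\ge\frac{am}{2}$. The paper does exactly this averaging, but deterministically. It takes the $\lceil Ma\rceil$ vertices of largest degree into the cover, so that $e(F)\ge\frac{|Y'|}{|Y|}e(H)\ge a|Y'|/2$, with no pruning and no probability.

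\emph{The case split is not justified as written.} You recorded the bound $e(H')\ge(1-\eta)a(|S'|+|I'|)/2$. With that bound, your second case ($\beta$ below the threshold $(1-\eta)a(|S'|+|I'|)/(2|I'|)$) only yields $e(H'[S'])>0$. It does not give average degree $(1-2\eta)a$ on $S'$, so that branch is unsupported. It could be rescued using the sharper fact that pruning never lowers the average degree below $a$, together with $|I'|\ge Ma$. But it is never needed, because $\beta\ge(1-\eta)a/2$ holds unconditionally from the minimum-degree property.

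\emph{Minor rounding point.} $|V(F)|\le Ca+\lceil Ma\rceil$, so you need $L=C+M+1$, or $L$ slightly larger than $C+M$ once $a_0$ is large, rather than exactly $C+M$.
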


\begin{proof}
If $\eta\ge1$, take $L=2$ and $a_0=1$. Since $H$ has positive average degree, it contains an edge, and that edge alone gives a subgraph of order at most $La$ and average degree at least $(1-\eta)a$. We may therefore assume that $\eta\in(0,1)$.

Let $X$ be a vertex cover of $H$ with $|X|=x\le Ca$, and put $Y=V(H)\setminus X$. Then $Y$ is independent. Choose a constant
$
        M>\frac{C(1-\eta)}{\eta},
$
set $L=C+M+1$, and choose $a_0\ge1$ so that
\[
        \frac{M}{C+M+1/a_0}\ge1-\eta.
\]
If $|Y|\le Ma+1$, then $F:=H$ has at most $(C+M+1)a=La$ vertices and average degree at least $a$, so we are done.

Suppose now that $|Y|>Ma+1$. Let $Y'\subseteq Y$ be a set of $\lceil Ma\rceil$ vertices with largest degree into $X$, and let $F:=H[X\cup Y']$. Since the sum of the $|Y'|$ largest degrees from $Y$ into $X$ is at least a $|Y'|/|Y|$ proportion of the total, we have
\[
        e(F)\ge e_H(X)+\frac{|Y'|}{|Y|}e_H(X,Y)
        \ge \frac{|Y'|}{|Y|}e(H).
\]
Using $e(H)\ge a(x+|Y|)/2$, it follows that
\[
        e(F)\ge \frac{|Y'|}{|Y|}\cdot \frac{a(x+|Y|)}2
        \ge \frac{a|Y'|}{2}.
\]
Therefore
\[
        \overline d(F)\ge \frac{a|Y'|}{x+|Y'|}
        \ge \frac{aMa}{Ca+Ma+1}.
\]
By the choice of $a_0$, the last expression is at least $(1-\eta)a$ for every $a\ge a_0$. Also $|V(F)|=x+|Y'|\le La$. This proves the lemma.
\end{proof}

\section{Density-sensitive transference}\label{sec:transference}

We now prove the general transference theorem. It records the full dependence of the guaranteed forest order on the density of the host and on the proportion of edges retained after the adversarial deletion.

\begin{theorem}\label{thm:transference}
For every $\beta\in(0,1]$, $\alpha\in[0,1)$, integer $D\ge1$ and $\xi\in(0,(1-\alpha)\beta)$, there are a constant $K=K(\beta,\alpha,D,\xi)$, an integer $N_0=N_0(\beta,\alpha,D,\xi)$ and a function $\eps:\mathbb N\to[0,1]$ with $\eps(N)\to0$ as $N\to\infty$ such that the following holds. For every $N\ge N_0$, every $N$-vertex graph $F$ of edge density
\[
        \lambda:=\frac{e(F)}{\binom N2}\ge\beta,
\]
and every $p\in[K/N,1]$,
\[
\mathbb P\left(
\begin{array}{c}
\text{every subgraph $H\subseteq F_p$ with }e(H)\ge(1-\alpha)e(F_p)\\[2pt]
\text{is $\cF_{\le((1-\alpha)\lambda-\xi)N,D}$-universal}
\end{array}
\right)\ge1-\eps(N).
\]
The function $\eps$ is independent of the choices of $F$ and $p$ in these ranges.
\end{theorem}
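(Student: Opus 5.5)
We realize $F_p$ as $F\cap\Gamma$, where $\Gamma\sim G(N,p)$ is the binomial random graph on $V(F)$: each pair is kept independently with probability $p$, and $F\cap\Gamma$ has exactly the law of $F_p$. The plan has four steps. First, show that with high probability $e(F\cap\Gamma)/e(\Gamma)$ is close to $\lambda$. Next, deduce that every $H\subseteq F_p$ with $e(H)\ge(1-\alpha)e(F_p)$ has more than $(\rho+\delta)e(\Gamma)$ edges, for suitable constants $\rho,\delta$. Then apply Theorem~\ref{thm:amps} to $\Gamma'=H$. Finally, pass from trees to forests using Lemma~\ref{lem:forestextension}.

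\textbf{Parameters.} Set $D'=\max\{D,3\}$ and $\rho:=(1-\alpha)\lambda-\xi/2$. Since $\lambda$ depends on $F$, we cannot use it directly. Instead, discretize: choose $\delta=\xi/8$ and a finite grid $\rho_1<\dots<\rho_m$ of values in $(0,1)$ with mesh at most $\xi/8$ covering $[(1-\alpha)\beta-\xi,1)$; note $(1-\alpha)\beta-\xi>0$. For each grid point apply Theorem~\ref{thm:amps} with $(D',\rho_j,\delta)$ to get constants $C_j$, and put $K=\max_j C_j$. Given $F$, pick the largest grid value $\rho_j\le(1-\alpha)\lambda-\xi/2$. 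Then
\[
\lfloor\rho_jN\rfloor+1\ge\bigl((1-\alpha)\lambda-\xi\bigr)N
\]
for $N$ large, and $\rho_j+\delta\le(1-\alpha)\lambda-3\xi/8$.

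\textbf{Uniformity of the failure probability.} The failure probability in Theorem~\ref{thm:amps} must be uniform in $p\in[C/N,1]$. As stated, "with high probability" means it tends to $0$ for each sequence $p=p(N)$. A standard argument gives a function $\eps_j(N)=\sup_{p\ge C_j/N}\Pr(\text{failure})\to0$: if it did not tend to $0$, pick a bad sequence $p(N)$, contradicting the whp statement. Since there are finitely many $j$, taking the maximum gives a single $\eps$. The concentration step below contributes a uniform $\exp(-\Omega(pN^2))$ term, which is at most $\exp(-\Omega(KN))$.

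\textbf{Density transfer.} We have $e(F\cap\Gamma)\sim\mathrm{Bin}(e(F),p)$ and $e(\Gamma)\sim\mathrm{Bin}(\binom N2,p)$. Their means are $\lambda p\binom N2\ge\beta p\binom N2\ge\beta KN/4$ and $p\binom N2$ respectively. By Chernoff, both lie within a factor $1\pm\gamma$ of their means with probability at least $1-4\exp(-\gamma^2\beta KN/12)$, which tends to $0$ uniformly in $F$ and $p$. On this event,
\[
e(H)\ge(1-\alpha)e(F_p)\ge(1-\alpha)\tfrac{1-\gamma}{1+\gamma}\lambda\,e(\Gamma)>(\rho_j+\delta)e(\Gamma),
\]
provided $\gamma$ is small in terms of $\xi$. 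Theorem~\ref{thm:amps} then shows that $H$ contains every tree with $\lfloor\rho_jN\rfloor$ edges and maximum degree at most $D'$.

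\textbf{Forests.} Let $J\in\cF_{\le((1-\alpha)\lambda-\xi)N,D}$. By Lemma~\ref{lem:forestextension}, extend $J$ to a tree $J^+$ with $\Delta(J^+)\le D'$. Then attach a path, or pendant leaves placed along a path, to reach exactly $\lfloor\rho_jN\rfloor+1$ vertices while keeping maximum degree at most $D'$. This is possible because $|J|\le\lfloor\rho_jN\rfloor+1$ and because, if $J$ is empty, we simply embed nothing. The padded tree embeds into $H$, so $J$ does too.

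\textbf{Main obstacle.} The delicate part is the uniformity: handling $\lambda$ depending on $F$ through the finite grid, and extracting a single failure function from the whp statement of Theorem~\ref{thm:amps} that is uniform over all $p\ge K/N$. I would settle this with the sequential-compactness argument above. If that step looked shaky, I would instead choose, for each $N$, a worst-case $p$ and apply the theorem along that sequence.
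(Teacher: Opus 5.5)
Your proposal is correct and follows essentially the same route as the paper. The shared steps are: a finite family of values of $\rho$, with Theorem~\ref{thm:amps} applied to each using $\delta=\xi/8$ and $D_0=\max\{D,3\}$; the subsequence argument that makes the failure probability uniform over $p\ge K/N$; Chernoff concentration of $e(\Gamma)$ and $e(F\cap\Gamma)$, which turns $(1-\alpha)$-retention in $F_p$ into relative density above $\rho+\xi/8$ inside $\Gamma$; and Lemma~\ref{lem:forestextension} followed by path padding to handle forests. The only differences are cosmetic: you use a fixed Chernoff accuracy $\gamma=\gamma(\xi)$ where the paper uses $t_N=N^{-1/4}$, and you phrase the grid as a mesh where the paper uses the window $x-3\xi/4<\rho<x-\xi/2$.
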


\begin{proof}
Put $D_0:=\max\{D,3\}$ and let
$
        I=[(1-\alpha)\beta,1-\alpha].
$
Choose a finite set $\mathcal R\subset(0,1)$ such that for every $x\in I$ there is some $\rho\in\mathcal R$ satisfying
\begin{equation}\label{eq:rhochoice}
        x-\frac{3\xi}{4}<\rho<x-\frac{\xi}{2}.
\end{equation}
Such a finite set exists because the intervals in \eqref{eq:rhochoice} have fixed positive length and their centers range over the compact interval $I$. For each $\rho\in\mathcal R$, apply Theorem~\ref{thm:amps} with parameters $D_0,\rho,\xi/8$, and let $C_\rho$ be the resulting constant. Choose
$
        K>\max_{\rho\in\mathcal R}C_\rho.
$

For $\rho\in\mathcal R$, let $\mathcal E_\rho(N,p)$ be the event that the conclusion of Theorem~\ref{thm:amps} holds for $\Gamma=G(N,p)$ with parameters $D_0,\rho,\xi/8$. The high-probability assertion in Theorem~\ref{thm:amps} is uniform over $p\in[K/N,1]$: more precisely,
\[
        u_\rho(N):=\sup_{p\in[K/N,1]}\mathbb P\bigl(\mathcal E_\rho(N,p)^c\bigr)
        \longrightarrow0.
\]
Indeed, otherwise there would be a constant $a>0$, a subsequence $N_j\to\infty$ and choices $p_j\ge K/N_j$ for which the failure probability is at least $a$. Extending the $p_j$ to a sequence $p=p(N)\ge K/N$ would contradict Theorem~\ref{thm:amps}. Since $\mathcal R$ is finite,
\[
        u(N):=\sum_{\rho\in\mathcal R}u_\rho(N)=o(1).
\]

Now expose $\Gamma\sim G(N,p)$ on the vertex set of $F$, so that $F_p$ has the same distribution as $F\cap\Gamma$. Set $M:=\binom N2$ and $t_N:=N^{-1/4}$. The random variables $e(\Gamma)$ and $e(F\cap\Gamma)$ are binomial with means
$
        \mu_0=pM
        $ and $
        \mu_F=pe(F)=~p\lambda M,
$
respectively. Uniformly over all $p\in[K/N,1]$ and all $F$ with $\lambda\ge\beta$,
\[
        \mu_0\ge\frac{K(N-1)}2
        \qquad\text{and}\qquad
        \mu_F\ge\frac{\beta K(N-1)}2.
\]
Therefore the multiplicative Chernoff bound gives
\begin{align*}
&\mathbb P\left(
 |e(\Gamma)-\mu_0|>t_N\mu_0
 \text{ or }
 |e(F\cap\Gamma)-\mu_F|>t_N\mu_F
 \right)\\
&\hspace{35mm}\le
4\exp\left(-\frac{\beta Kt_N^2(N-1)}6\right)
=:v(N)=o(1),
\end{align*}
where the bound is independent of $F$ and $p$. On the complementary event, write
\[
        e(\Gamma)=\mu_0(1+\theta_0),
        \qquad
        e(F\cap\Gamma)=\mu_F(1+\theta_F),
        \qquad
        |\theta_0|,|\theta_F|\le t_N.
\]
Then
\begin{equation}\label{eq:relative-density}
\left|
        \frac{e(F\cap\Gamma)}{e(\Gamma)}-\lambda
\right|
=\lambda\left|\frac{1+\theta_F}{1+\theta_0}-1\right|
\le\frac{2t_N}{1-t_N}
=:r_N,
\end{equation}
where $r_N=o(1)$ uniformly over the required range.

Set $x:=(1-\alpha)\lambda\in I$, and choose $\rho\in\mathcal R$ satisfying \eqref{eq:rhochoice}. On the intersection of the events above, every $H\subseteq F\cap\Gamma$ with $e(H)\ge(1-\alpha)e(F\cap\Gamma)$ satisfies, for all sufficiently large $N$,
\[
        \frac{e(H)}{e(\Gamma)}
        \ge (1-\alpha)\frac{e(F\cap\Gamma)}{e(\Gamma)}
        \ge x-(1-\alpha)r_N
        >\rho+\frac{\xi}{8}.
\]
Theorem~\ref{thm:amps} therefore implies that $H$ contains every tree with $\lfloor\rho N\rfloor$ edges and maximum degree at most $D_0$.

Let $J\in\cF_{\le(x-\xi)N,D}$. The empty forest is trivial, so assume $J$ has at least one vertex. By Lemma~\ref{lem:forestextension}, $J$ extends to a tree $J^+$ on the same vertex set with maximum degree at most $D_0$. Since \eqref{eq:rhochoice} gives $\rho>(x-\xi)+\xi/4$, for all sufficiently large $N$ we have
\[
        e(J^+)\le |V(J)|-1
        \le\lfloor(x-\xi)N\rfloor-1
        \le\lfloor\rho N\rfloor.
\]
Extend $J^+$ further, if necessary, to a tree with exactly $\lfloor\rho N\rfloor$ edges by attaching a path at a vertex of degree at most one. This is possible because $\rho<1$, so a tree with $\lfloor\rho N\rfloor$ edges has at most $N$ vertices. The maximum degree remains at most $D_0$. This larger tree embeds in $H$, and hence so does $J$.

Choose $N_0\ge K$ large enough for all of the preceding estimates and define
\[
        \eps(N):=\min\{1,u(N)+v(N)\}
\]
for $N\ge N_0$ (and arbitrarily for smaller $N$). Then $\eps(N)\to0$, the bound is independent of $F$ and $p$, and the event just used is independent of the choices of $H$ and $J$. This proves simultaneous universality for every eligible $H$.
\end{proof}

The dense-host result is now an immediate consequence.

\begin{proof}[Proof of Theorem~\ref{thm:dense}]
Let
$
        \beta:=\frac1L
        $ and $
        \xi:=\frac{c-\alpha}{2L}.
$
Since $c<1$, we have $0<\xi<(1-\alpha)\beta$, as required in Theorem~\ref{thm:transference}. Let $K$, $N_0$ and $\eps_0(N)$ be supplied by that theorem for these parameters, and choose $d_0\ge\max\{K,N_0\}$.

The hypotheses imply $N\ge d+1$ and, on writing $q=d/N$, that $q\ge1/L$. The edge density $\lambda$ of $F$ satisfies
\[
        \lambda\ge\frac{d}{N-1}>q\ge\beta.
\]
Moreover,
\[
        ((1-\alpha)\lambda-\xi)N
        >(1-\alpha)d-\frac{c-\alpha}{2L}N
        \ge\left(1-\frac{c+\alpha}{2}\right)d
        >(1-c)d.
\]
Since $p\ge K/d$ and $N>d$, we have $p\ge K/N$, so Theorem~\ref{thm:transference} applies. Define the tail supremum
\[
        \eps(d):=\sup\bigl(\{0\}\cup\{\eps_0(N):N\in\mathbb N,\ N>d\}\bigr).
\]
Because $\eps_0(N)\to0$, its tail supremum also tends to zero. For every eligible $F$ we have $N>d$, so its failure probability is at most $\eps_0(N)\le\eps(d)$. This proves the claimed uniformity over all $N$, $F$ and $p$ in Theorem~\ref{thm:dense}.
\end{proof}

%%%%%%%%%%%%%%%%%%%
%%%%%%%%%%%%%%%%%%%%%%%%%%%

\section{Small-cover components}

We now prove Theorem~\ref{thm:smallcover}. The proof extracts edge-disjoint dense spots. For large $d$, each spot succeeds with probability tending to one; when $d$ remains bounded, there are linearly many independent spots, each with probability bounded away from zero.

\begin{proof}[Proof of Theorem~\ref{thm:smallcover}]
Fix an arbitrary sequence satisfying the hypotheses of the theorem, and abbreviate $G:=G_n$, $d:=d_n$ and $p:=p_n$. Choose $\eta>0$ sufficiently small that
\begin{equation}\label{eq:etafit}
        1-c \le (1-c/2)(1-\eta)^2.
\end{equation}
Set
\[
        C':=\frac{C}{1-\eta}.
\]
Apply Lemma~\ref{lem:coverdense} with parameters $C'$ and $\eta$, and let $L_0=L(C',\eta)$ and $a_0=a_0(C',\eta)$ be the resulting constants. Define
\[
        d_*:=(1-\eta)^2d
        \qquad\text{and}\qquad
        L:=\frac{L_0}{(1-\eta)^2}.
\]
Apply Theorem~\ref{thm:dense} with parameters $c/2,L,D$ and $\alpha=0$, and let $K_0$ and $d_1$ be the constants it gives. Increase $d_1$ if necessary so that Lemma~\ref{lem:coverdense} is applicable at scale $(1-\eta)d$ whenever $d\ge d_1$. Finally choose
\[
        K:=\frac{K_0}{(1-\eta)^2}
\]
and take $d_0$ large enough for all previous requirements. If $p\ge K/d$, then $p\ge K_0/d_*$.

We construct a family of edge-disjoint subgraphs. Start with the residual graph $R_0:=G$. As long as the current residual graph $R_j$ has at least $(1-\eta)dn/2$ edges, some connected component $H_j$ of $R_j$ has average degree at least $(1-\eta)d$. The component $H_j$ is contained in a component of $G$, and hence has vertex-cover number at most $Cd=C'(1-\eta)d$. By Lemma~\ref{lem:coverdense}, applied with $a=(1-\eta)d$, the graph $H_j$ contains a subgraph $F_j$ such that
\begin{equation}\label{eq:Fj-size}
        |V(F_j)|\le L_0(1-\eta)d\le L_0d=Ld_*,
\end{equation}
and
\begin{equation}\label{eq:Fj-degree}
        \overline d(F_j)\ge (1-\eta)^2d=d_*.
\end{equation}
Remove all edges of $F_j$ from the residual graph and continue.

When the process stops, fewer than $(1-\eta)dn/2$ edges remain. Since $G$ initially has at least $dn/2$ edges, the extracted graphs contain at least $\eta dn/2$ edges in total. Each $F_j$ has at most $L_0d$ vertices and therefore at most $(L_0d)^2/2$ edges. Hence, if $r$ is the number of extracted subgraphs, then
\begin{equation}\label{eq:r-lower}
        r\ge \frac{\eta n}{L_0^2d}.
\end{equation}

For every $j$, \eqref{eq:Fj-size} and \eqref{eq:Fj-degree} allow us to apply Theorem~\ref{thm:dense} to $F_j$ at scale $d_*$. Moreover, by \eqref{eq:etafit},
$
        (1-c)d\le (1-c/2)d_*.
$
Thus Theorem~\ref{thm:dense} implies that $F_{j,p}$ is $\cF_{\le(1-c)d,D}$-universal with probability at least $1-\eps(d_*)$, where $\eps(x)\to0$ is the error term from Theorem~\ref{thm:dense}. In particular, after increasing $d_0$ if necessary, this probability is at least $1/2$ for all $d\ge d_0$.

The graphs $F_j$ are edge-disjoint, so the corresponding percolation events are independent. Let $\pi(d):=1-\eps(d_*)$. Then $\pi(d)\to1$ as $d\to\infty$ and $\pi(d)\ge1/2$ for all $d\ge d_0$. Therefore the probability that none of the $F_j$ is universal is at most
$
        (1-\pi(d))^r.
$
To verify that this bound tends to zero uniformly for every choice of $d=d(n)$, fix $\gamma>0$. Choose $d_2\ge d_0$ so large that $1-\pi(d)\le\gamma$ whenever $d\ge d_2$. If $d\ge d_2$, then $r\ge1$ and the failure probability is at most $\gamma$. If $d<d_2$, then \eqref{eq:r-lower} and $\pi(d)\ge1/2$ give
\[
        (1-\pi(d))^r
        \le 2^{-r}
        \le 2^{-\eta n/(L_0^2d_2)}=o(1).
\]
Since $\gamma$ is arbitrary, the failure probability tends to zero. Hence, with probability tending to one, at least one extracted subgraph is $\cF_{\le(1-c)d,D}$-universal. Since each $F_j$ is a subgraph of $G$, the theorem follows.
\end{proof}

The small-cover conclusion is stable under deleting a small number of edges before percolation. This gives a form that is closer to the structural conclusions of hyperstability results.

\begin{corollary}\label{cor:nearsmallcover}
For every $c\in(0,1)$, $C\ge1$, integer $D\ge1$ and $\delta\in[0,c/2)$, there are constants $K=K(c,C,D,\delta)$ and $d_0=d_0(c,C,D,\delta)$ such that the following holds. For every sequence $(G_n,G_n^0,d_n,p_n)_{n\ge1}$ in which $G_n$ is an $n$-vertex graph of average degree at least $d_n\ge d_0$, $G_n^0$ is a spanning subgraph satisfying
$
        e(G_n)-e(G_n^0)\le \delta d_n n,
$
every connected component of $G_n^0$ has vertex-cover number at most $Cd_n$, and $p_n\in[K/d_n,1]$, one has
\[
\mathbb P\bigl((G_n)_{p_n}\text{ is $\cF_{\le(1-c)d_n,D}$-universal}\bigr)\longrightarrow1
\qquad\text{as }n\to\infty.
\]
\end{corollary}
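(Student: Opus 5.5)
The plan is to reduce Corollary~\ref{cor:nearsmallcover} to Theorem~\ref{thm:smallcover}, applied to $G_n^0$ at a slightly smaller average-degree scale. Since $(G_n^0)_{p}$ is a subgraph of $(G_n)_{p}$ under the natural coupling, universality of the former implies universality of the latter. The only issue is that $G_n^0$ may have average degree below $d_n$. We have
\[
        e(G_n^0)\ge e(G_n)-\delta d_n n\ge \frac{d_n n}{2}-\delta d_n n=(1-2\delta)\frac{d_n n}{2}.
\]
So $G_n^0$ has average degree at least $d_n':=(1-2\delta)d_n$. This is positive because $\delta<c/2<1/2$.

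Next I rescale the parameters. The vertex-cover bound becomes $Cd_n=\frac{C}{1-2\delta}d_n'$, so set $C':=C/(1-2\delta)$. Choose $c'\in(0,1)$ with
\[
        (1-c')(1-2\delta)\ge 1-c,
\]
for instance $c'$ given by $1-c'=(1-c)/(1-2\delta)$. The condition $\delta<c/2$ is used exactly here: it gives $1-c<1-2\delta$, hence $c'>0$, and clearly $c'<1$. Then
\[
        \cF_{\le(1-c)d_n,D}\subseteq\cF_{\le(1-c')d_n',D}.
\]
Apply Theorem~\ref{thm:smallcover} with parameters $c',C',D$ to obtain $K'$ and $d_0'$. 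Set
\[
        K:=\frac{K'}{1-2\delta},\qquad d_0:=\frac{d_0'}{1-2\delta},
\]
so that $p_n\ge K/d_n=K'/d_n'$ and $d_n'\ge d_0'$.

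Now apply Theorem~\ref{thm:smallcover} to the sequence $(G_n^0,d_n',p_n)$. This is legitimate: $G_n^0$ is an $n$-vertex graph of average degree at least $d_n'$, each of its components has vertex-cover number at most $C'd_n'$, and $p_n\in[K'/d_n',1]$. It gives that $(G_n^0)_{p_n}$ is $\cF_{\le(1-c')d_n',D}$-universal with probability tending to one. By monotonicity under the coupling and the family inclusion above, the same holds for $(G_n)_{p_n}$ and $\cF_{\le(1-c)d_n,D}$.

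There is no real obstacle; the only point to check is the constant bookkeeping. One must confirm that $c'\in(0,1)$ precisely because $\delta<c/2$, and that the sequence hypotheses of Theorem~\ref{thm:smallcover} are verified for every $n$. The floor convention in $\cF_{\le m,D}$ is harmless, since the inclusion holds at the level of real bounds.
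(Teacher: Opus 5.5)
Your proposal is correct and follows the paper's proof essentially line by line. It uses the same rescaling $d'=(1-2\delta)d$, $C'=C/(1-2\delta)$ and $c'$ with $1-c'=(1-c)/(1-2\delta)$, i.e.\ $c'=(c-2\delta)/(1-2\delta)$, the same constants $K=K'/(1-2\delta)$ and $d_0=d_0'/(1-2\delta)$ fed into Theorem~\ref{thm:smallcover}, and the same concluding monotonicity step $(G_n^0)_{p_n}\subseteq(G_n)_{p_n}$.
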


\begin{proof}
Fix an arbitrary sequence satisfying the hypotheses, and abbreviate $G:=G_n$, $G_0:=G_n^0$, $d:=d_n$ and $p:=p_n$. Put
\[
        d':=(1-2\delta)d,
        \qquad
        c':=\frac{c-2\delta}{1-2\delta},
        \qquad\text{and}\qquad
        C':=\frac{C}{1-2\delta}.
\]
Since $0\le\delta<c/2$ and $c<1$, we have $d'>0$ and $c'\in(0,1)$.
The assumptions imply
\[
        e(G_0)\ge \frac{dn}{2}-\delta dn=\frac{d'n}{2},
\]
so $G_0$ has average degree at least $d'$. Every connected component of $G_0$ has vertex-cover number at most $Cd=C'd'$, and
$
        (1-c')d'=(1-c)d.
$
Let $K'$ and $d_0'$ be the constants supplied by Theorem~\ref{thm:smallcover} for the parameters $c',C',D$. Taking
\[
        K:=\frac{K'}{1-2\delta}
        \qquad\text{and}\qquad
        d_0:=\frac{d_0'}{1-2\delta},
\]
we have $d'\ge d_0'$ and $p\ge K'/d'$. Applying Theorem~\ref{thm:smallcover} to the sequence $(G_n^0,d_n',p_n)$, where $d_n'=(1-2\delta)d_n$, shows that $(G_0)_p$ is $\cF_{\le(1-c)d,D}$-universal with probability tending to one. Since $(G_0)_p\subseteq G_p$, the same is true of $G_p$.
\end{proof}

\medskip

\begin{remark}
Theorem~\ref{thm:transference} gives a density-sensitive resilient forest-universality statement, while Theorem~\ref{thm:dense}, Theorem~\ref{thm:smallcover} and Corollary~\ref{cor:nearsmallcover} verify the robust approximate Erd\H{o}s--S\'os conjecture for fixed maximum degree in the dense, small-cover and near-small-cover regimes. The remaining parts of Conjecture~\ref{conj:cmw} appear to require new ideas. In the proof of the robust Erd\H{o}s--Gallai theorem, the non-small-cover pieces are used to create a long depth-first-search jump, which is enough for a long cycle. For trees, a single long path-like object is not enough; one needs simultaneous branching capacity.
\end{remark}

The most immediate problem left open by this paper is whether the fixed-degree restriction can be removed from Theorem~\ref{thm:dense}.

\begin{question}\label{q:growingD}
For every $c\in(0,1)$ and $L\ge1$, is there $K=K(c,L)$ such that the following holds? If $F$ has at most $Ld$ vertices and average degree at least $d$, and if $T$ is a tree on at most $(1-c)d$ vertices with maximum degree at most $\Delta$, does $p\ge K\Delta/d$ imply that $F_p$ contains $T$ with probability tending to one?
\end{question}

A positive answer would give the full conjectured dependence on $\Delta$ in the small-cover regime.

\section*{Acknowledgments}
The author would like to thank the anonymous referee for their  helpful comments that improved the paper.

%%%%%%%%%%%%%%%%%%%%%%%%%%%%%%%%%%%%%%%%%%%%%%%%%%%%%%%%%%%%%%%%%%%%

\end{document}